\setlist{nosep}
\theoremstyle{plain}
\newtheorem{theorem}{Theorem}[section]
\newtheorem{proposition}[theorem]{Proposition}
\newtheorem{corollary}[theorem]{Corollary}
\theoremstyle{definition}
\newtheorem{definition}[theorem]{Definition}
\newtheorem{remark}[theorem]{Remark}
\newcommand{\ZZ}{\mathbb{Z}}
\newcommand{\Gm}{\mathbb{G}_m}
\title[]{Bloch--Suslin Complex and Strong $\mathbb{A}^{1}-$invariance}
\author{Saheb Mohapatra}
\address{Department of Mathematics, Durham University, United Kingdom}
\email{saheb.mohapatra.math@gmail.com}
\keywords{}
\subjclass[2020]{}
\begin{document}
\begin{abstract}
We prove that there is no extension of the abelian groups appearing in the Bloch-Suslin complex to strongly $\mathbb{A}^{1}-$invariant sheaves on $Sm_{k}$ (char($k$)=0) that also extend the canonical symbol maps from the respective $\mathbb{G}_{m}^{\wedge n}$ (i.e., from $\mathbb{G}_{m}$ and $\mathbb{G}_{m}^{\wedge2}$).
\end{abstract}
\maketitle







\section{Introduction}
Let \(k\) be a perfect field and write \(Sm_{k}\) for the category of smooth, separated schemes of finite type over \(k\). As in \cite{Morel12}, we consider Nisnevich sheaves on \(Sm_{k}\); denote by \(Shv(Sm_{k})\) (resp. \(Shv_{\mathcal{A}b}(Sm_{k})\)) the category of Nisnevich sheaves of sets (resp. abelian groups) on $Sm_{k}$. A central notion in this paper is strong $\mathbb{A}^{1}-$invariance for Nisnevich sheaves.  A sheaf of groups \(G\) is called \emph{strongly $\mathbb{A}^{1}-$invariant} if for every \(X\in Sm_{k}\) the projection $\mathbb{A}^{1}\times X\to X$ induces isomorphisms
\[
H^{i}_{\mathrm{Nis}}(X;G)\xrightarrow{\ \simeq\ } H^{i}_{\mathrm{Nis}}(\mathbb{A}^{1}\times X;G)
\qquad(i=0,1).
\]
A sheaf \(M\) of abelian groups is \emph{strictly $\mathbb{A}^{1}-$invariant} if the above morphism is an isomorphism for all \(i\ge 0\).  Denote by $SAI(Sm_{k})$ the subcategory of strongly $\mathbb{A}^{1}-$invariant sheaves in $Shv_{\mathcal{A}b}(Sm_{k})$. These notions originate in Morel--Voevodsky's \(\mathbb{A}^{1}\)-homotopy theory; and we refer to \cite{Morel12} primarily throughout the paper.\\

One of the canonical (and in fact, universal by Theorem \ref{2.3}) families of strongly, and hence strictly $\mathbb{A}^{1}-$invariant (see \cite{Morel12}) sheaves of abelian groups are the Milnor--Witt \(K\)-theory sheaves, \textbf{K}$_{n}^{MW}$. The Milnor $K-$theory sheaves, \textbf{K}$^{M}_{n}$, are certain quotients of these and are also strongly $\mathbb{A}^{1}-$invariant. The Milnor-Witt (resp. Milnor) $K-$groups were originally defined for fields via an explicit presentation as in Definition \ref{2.1}. As done in \cite{Morel12}, if $\mathcal{F}_{k}$ denotes the category of fields of finite transcendence degree over $k$, then by verifying certain properties of \textbf{K}$^{MW}_{n}$ as functors on $\mathcal{F}_{k}$, we can extend it to strongly $\mathbb{A}^{1}-$invariant sheaves on $Sm_{k}$. Denote by $\mathbb{G}_{m}^{\wedge n}$, the Nisnevich sheaf on $Sm_{k}$ sending an irreducible $X$ in $Sm_{k}$ to $(\mathcal{O}(X)^{*})^{\wedge n}$, pointed by $1$. Then, as shown in \cite{Morel12}, we have symbol maps $\sigma_{n}:\mathbb{G}_{m}^{\wedge n}\rightarrow$\textbf{K}$^{MW}_{n}$. \\

On the other hand, the abelian groups appearing in the Bloch--Suslin complex arise from scissors-congruence and dilogarithmic constructions and conjecturally appear in low-weight motivic complexes of Goncharov (see \cite{Goncharov95}). For a field \(F\) the pre-Bloch group (sometimes called the scissors congruence group) \(\mathcal{P}(F)\) is the quotient of the free abelian group $\ZZ[F^{**}]$ on \(F^{**}:=F^{*}\setminus\{1\}\) by the subgroup generated by the classical five-term relations
\[
\{x\}-\{y\}+\Big\{\frac{y}{x}\Big\}+\Big\{\frac{1-x}{1-y}\Big\}-\Big\{\frac{y(1-x)}{x(1-y)}\Big\},
\qquad x\ne y,
\]
and there is an exact sequence relating \(\mathcal{P}(F)\), \(\wedge^{2}_{\ZZ}F^{*}\), and \(K_{2}^{M}(F)\). Here, $\wedge^{2}_{\ZZ}F^{*}$ denotes the quotient of $F^{*}\otimes_{\mathbb{Z}} F^{*}$ by the subgroup generated by $x\otimes y+y\otimes x$ for $x,y\in F^{*}$. Denote by $\mathcal{A}$, the  functor on $\mathcal{F}_{k}$ sending $F$ to $\wedge_{\ZZ}^{2}F^{*}$. Formally, the Bloch-Suslin complex is the following complex of functors on $\mathcal{F}_{k}$
\[
\mathcal{P}\xrightarrow{\lambda}\mathcal{A}
\]
where for $F\in\mathcal{F}_{k}$, $\lambda(F):\mathcal{P}(F)\rightarrow\mathcal{A}(F)$ is given by $\{z\}\mapsto z\wedge(1-z)$. The cokernel of this map is isomorphic to $K^{M}_{2}(F)$ and the kernel is rationally isomorphic to $K_{3}^{\mathrm{ind}}(F)$. The latter was proved by Suslin (see \cite{Suslin91}). The former was proved by Hutchinson (see \cite{Hutchinson90}) by analyzing the spectral sequence associated to a double  complex computing $H_{2}(GL_{2}(F),\ZZ)$ which yields the following exact sequence:
\[
\mathcal{P}(F)\xrightarrow{\ \lambda\ } \wedge^{2}_{\ZZ}F^{*} \xrightarrow{\ \mu\ } K_{2}^{M}(F)\to 0,
\]
where \(\lambda(\{z\})=z\wedge(1-z)\) and \(\mu(x\wedge y)=\{x,y\}\).  The groups \(\mathcal{P}(F)\) and \(\wedge^{2}_{\ZZ}F^{*}\), however, are \emph{a priori} functors on the category \(\mathcal{F}_{k}\).  A natural question is whether, like Minor $K-$theory, these functors admit extensions to Nisnevich sheaves on \(Sm_{k}\) which are strongly $\mathbb{A}^{1}-$invariant while simultaneously extending the evident symbol maps from $\mathbb{G}_{m}$ (resp. $\mathbb{G}_{m}^{\wedge 2}$):
\[
\Gm\longrightarrow \mathcal{P}\quad (resp.\quad  \Gm^{\wedge 2}\longrightarrow \mathcal{A})\]
restricting on fields to \(z\mapsto\{z\}\) for $z\neq1$ (and $1\mapsto 0$) (resp. $(x,y)\mapsto x\wedge y$). Another motivation for this question is the above exact sequence combined with the fact that \textbf{K}$^{M}_{2}\in SAI(Sm_{k})$, an abelian subcategory of $Shv_{\mathcal{A}b}(Sm_{k})$ (see \cite{Morel12}).

\medskip

The main result of this paper answers this question in the negative.

\begin{theorem}\label{thm:main}
Let \(k\) be a perfect field of characteristic zero.  There do not exist Nisnevich sheaves of abelian groups $\mathcal{P}$ and $\mathcal{A}$ on \(Sm_{k}\) which are strongly (hence strictly) $\mathbb{A}^{1}-$invariant and whose evaluations on fields recover \(\mathcal{P}\) and \(\mathcal{A}\), together with maps
$
\mathbb{G}_{m}\to \mathcal{P},  \mathbb{G}_{m}^{\wedge 2}\to \mathcal{A},
$
that restrict on fields to the canonical symbol maps \(z\mapsto\{z\}\) for $z\neq1$ $(1\mapsto 0)$ and \((x,y)\mapsto x\wedge y\).
\end{theorem}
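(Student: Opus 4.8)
The plan is to show that already the sheaf $\mathcal{A}$ cannot exist, which suffices: any pair $(\mathcal{P},\mathcal{A})$ as in the statement provides, in particular, a strongly $\mathbb{A}^{1}$-invariant sheaf $\mathcal{A}$ recovering $\wedge^{2}_{\ZZ}F^{*}$ on fields together with a morphism of pointed Nisnevich sheaves $\phi\colon\Gm^{\wedge 2}\to\mathcal{A}$ restricting on each field to $x\wedge y\mapsto x\wedge y$ (smash on the source, exterior square on the target). The pre-Bloch sheaf $\mathcal{P}$ and its symbol map then play no role. The engine of the argument is the universality recorded in Theorem \ref{2.3}: since $\mathbf{K}^{MW}_{2}$ is the free strongly $\mathbb{A}^{1}$-invariant sheaf on the pointed sheaf $\Gm^{\wedge 2}$, with unit the symbol map $\sigma_{2}\colon\Gm^{\wedge 2}\to\mathbf{K}^{MW}_{2}$, the morphism $\phi$ factors uniquely as $\phi=\bar\phi\circ\sigma_{2}$ through a morphism $\bar\phi\colon\mathbf{K}^{MW}_{2}\to\mathcal{A}$ of sheaves of abelian groups.

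The contradiction will come from the Steinberg relation. For every $F\in\mathcal{F}_{k}$ and every $x\in F^{*}\setminus\{1\}$ one has $\sigma_{2,F}(x\wedge(1-x))=[x][1-x]=0$ in $\mathbf{K}^{MW}_{2}(F)$. Pushing through $\bar\phi$ gives $\phi_{F}(x\wedge(1-x))=0$; but the defining property of $\phi$ says $\phi_{F}(x\wedge(1-x))=x\wedge(1-x)\in\wedge^{2}_{\ZZ}F^{*}$. Thus strong $\mathbb{A}^{1}$-invariance would force $x\wedge(1-x)=0$ in $\wedge^{2}_{\ZZ}F^{*}$ for every $F\in\mathcal{F}_{k}$ and every admissible $x$, i.e.\ it would force the exterior-square functor, equipped with its tautological symbol, to satisfy the Steinberg relation. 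It does not: unlike the quotient $\mathbf{K}^{M}_{2}$, the group $\wedge^{2}_{\ZZ}F^{*}$ retains the classes $x\wedge(1-x)$. It then remains only to produce one field of $\mathcal{F}_{k}$ witnessing the failure. I would take $F=k(t)$ and $x=t$: writing $k(t)^{*}\cong k^{*}\oplus\bigoplus_{p}\ZZ$ (the sum over monic irreducibles $p\in k[t]$) and using $1-t=(-1)\cdot(t-1)$, bilinearity gives $t\wedge(1-t)=t\wedge(-1)+t\wedge(t-1)$; the second summand lies in $\wedge^{2}\big(\bigoplus_{p}\ZZ\big)$ and is nonzero, since $t$ and $t-1$ are distinct free generators. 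Hence $t\wedge(1-t)\neq0$ in $\wedge^{2}_{\ZZ}k(t)^{*}$, contradicting the vanishing forced above and proving the theorem. The hypothesis on $k$ is used only to place us within Morel's framework; the point $k(t)\in\mathcal{F}_{k}$ and the non-vanishing are available for any perfect $k$.

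The step demanding the most care is the passage between the sheaf-level factorization and the field-level identity. I must check that Theorem \ref{2.3} genuinely applies — that $\mathcal{A}$ is strongly (hence strictly) $\mathbb{A}^{1}$-invariant and that $\phi$ is an honest morphism of Nisnevich sheaves, not merely a compatible family of maps on fields — and that evaluating $\sigma_{2}$, $\bar\phi$, and $\phi$ at the generic point $\Spec k(t)$ is legitimate and mutually compatible. This is exactly where strictness and unramifiedness enter: a strongly $\mathbb{A}^{1}$-invariant sheaf is unramified in Morel's sense, so its sections over $k(t)$ form the filtered colimit of its sections over opens of $\mathbb{A}^{1}_{k}$ and coincide with the prescribed functor $\wedge^{2}_{\ZZ}k(t)^{*}$, with every morphism respecting these colimits. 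Granting these compatibilities, the single relation $[x][1-x]=0$ in $\mathbf{K}^{MW}_{2}$ carries the entire proof.
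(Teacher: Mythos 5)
Your proposal is correct as a proof of the literal statement, and its engine --- factor the symbol map through $\mathbf{K}^{MW}_{2}$ via the universality of Theorem \ref{2.3}, then contradict the Steinberg relation $[x][1-x]=0$ with a nonvanishing wedge --- is exactly the paper's argument for $\mathcal{A}$ (Theorem \ref{3.2}). The differences are two. First, scope: the paper proves two separate nonexistence theorems, one for $\mathcal{P}$ (Theorem \ref{3.1}) and one for $\mathcal{A}$ (Theorem \ref{3.2}), and the main theorem follows from either; you prove only the $\mathcal{A}$ half and deduce nonexistence of the pair, which does suffice for the statement as worded but loses the paper's standalone result for $\mathcal{P}$, whose proof runs through the five-term relation, the constant $c_{F}$ of Proposition \ref{2.7}, and the solvability of $z(1-z)=3$ over $k(\sqrt{-1},\sqrt{-3},\sqrt{-11})$, before again landing on a nonzero wedge. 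Second, the witness: the paper takes $3\wedge(-2)$ in $\wedge^{2}_{\ZZ}k^{*}$, whose nonvanishing (Proposition \ref{2.8}) rests on the multiplicative independence of $3$ and $-2$ and a rationalization argument, hence genuinely uses characteristic zero; you take $t\wedge(1-t)$ in $\wedge^{2}_{\ZZ}k(t)^{*}$, and your computation is sound: under $k(t)^{*}\cong k^{*}\oplus\bigoplus_{p}\ZZ$ and the decomposition $\wedge^{2}(A\oplus B)\cong\wedge^{2}A\oplus(A\otimes B)\oplus\wedge^{2}B$, the component of $t\wedge(1-t)=t\wedge(-1)+t\wedge(t-1)$ in $\wedge^{2}\bigl(\bigoplus_{p}\ZZ\bigr)$ is $t\wedge(t-1)\neq 0$, since $t$ and $t-1$ are distinct basis elements. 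This buys you a characteristic-free witness (only Morel's perfectness hypothesis remains), and it replaces Proposition \ref{2.8} by an elementary free-abelian-group computation. Finally, the compatibility you flag at the end --- evaluating the sheaf-level factorization at the field $k(t)\in\mathcal{F}_{k}$ --- is already packaged in the paper as Theorem \ref{2.4}, which gives the field-level identity $\Phi_{2}(F)([u_{1}][u_{2}])=\phi(u_{1},u_{2})$ directly; you should cite it rather than re-derive it from unramifiedness.
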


The main ingredient of the proof is Theorem \ref{2.3} which compares objects of $SAI(Sm_{k})$ with the universal object \textbf{K}$^{MW}_{n}$ with respect to the symbol map $\sigma_{n}$. Thus, although \(K_{2}^{M}\) extends to a strongly $\mathbb{A}^{1}-$invariant sheaf, the intermediate objects \(\mathcal{P}\) and \(\mathcal{A}\) can not be promoted to strongly $\mathbb{A}^{1}-$invariant sheaves if one insists on extending the canonical symbol maps from \(\Gm\) and \(\Gm^{\wedge2}\).  Intuitively, the obstruction arises because \(\mathcal{P}\) and \(\mathcal{A}\) encode subtle five-term/dilogarithmic relations and anti-symmetric tensor structure that are incompatible with the relations in Milnor-Witt $K-$theory.

\subsection*{Acknowledgements}
This work was carried out as part of the author’s master’s thesis at the Indian Statistical Institute, Kolkata, India, from January to April 2025. The author is deeply grateful to his supervisor, Dr. Utsav Choudhury, for suggesting the problem and for his constant support throughout the project.

\bigskip

\section{Preparatory steps}

\begin{definition}[Milnor-Witt K-groups]\label{2.1}\cite{Morel12}
Consider the graded associative ring generated by the symbols $[u]$ for each $u\in F^{*}$ and one symbol $\eta$ of degree (-1) with the following relations:
\begin{itemize}
\item (Steinberg Relation) $\forall a\in F^{*}-\{1\},[a][1-a] = 0$
\item $\forall (a,b)\in (F^{*})^{2}, [ab] = [a]+[b]+\eta[a][b]$
\item $[u]\eta=\eta[u], \forall u\in F^{*}$
\item $h:=\eta[-1]+2$, then $\eta h = 0$.
\end{itemize}
Denote this ring by $K_{*}^{MW}(F)$ and its $n^{th}$ degree part by the abelian group $K_{n}^{MW}(F)$. So, we have $[a(1-a)]=[a]+[1-a]$ in $K_{1}^{MW}(F)$. Milnor $K-$theory, $K_{*}^{M}$, is the quotient of $K_{*}^{MW}(F)$ by the two-sided ideal generated by $\eta$ and $K_{n}^{M}(F)$ is its $n^{th}$ degree part. 
\end{definition}
\begin{definition}
Fix an irreducible $X\in Sm_{k}$ with function field $F$. As $X$ is irreducible, $(O(X)^{*})^{\wedge n}\subset(F^{*})^{\wedge n}$, where for any pointed set $(A,a)$, $A^{\wedge n}$ denotes its $n-$fold smash product with itself pointed by $a$. Here, $\mathcal{O}(X)^{*}$ is pointed by $1$. So, we have a map $(O(X)^{*})^{\wedge n}\rightarrow K_{n}^{MW}(F)$ such that $(u_{1},...,u_{n})\mapsto[u_{1}]...[u_{n}]$. But $[u_{1}]...[u_{n}]\in$\textbf{K}$_{n}^{MW}(X)$ by its definition in \cite{Morel12}.
This map, $\sigma_{n}:\mathbb{G}_{m}^{\wedge n}\rightarrow$\textbf{K}$_{n}^{MW}$ (called the canonical symbol map) is a morphism of pointed sheaves on $Sm_{k}$.
\end{definition}
\begin{theorem}\label{2.3}\cite{Morel12}
Let $n\geq 1$. The morphism $\sigma_{n}$ is the universal morphism from $\mathbb{G}_{m}^{\wedge n}$ to a strongly $\mathbb{A}^{1}$-invariant sheaf of abelian groups. That is, given a morphism of pointed sheaves $\phi:\mathbb{G}_{m}^{\wedge n}\rightarrow M$ where $M$ is a strongly $\mathbb{A}^{1}$-invariant sheaf of abelian groups, there exists a unique morphism of sheaves of abelian groups $\Phi$ such the following diagram commutes:

\[\begin{tikzcd}
{\mathbb{G}_{m}^{\wedge n}} \\
\\
{\textbf{K}_{n}^{MW}} &&& M
\arrow["\sigma_{n}", from=1-1, to=3-1]
\arrow["\phi", from=1-1, to=3-4]
\arrow["\Phi"', from=3-1, to=3-4]
\end{tikzcd}\]
\end{theorem}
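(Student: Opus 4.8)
The plan is to read the statement as the assertion that $\mathbf{K}^{MW}_n$ is the \emph{free} strongly $\mathbb{A}^1$-invariant sheaf of abelian groups generated by the pointed sheaf $\mathbb{G}_m^{\wedge n}$, with $\sigma_n$ playing the role of the universal (unit) map. Writing $U$ for the forgetful functor from $SAI(Sm_k)$ to the category of pointed Nisnevich sheaves and $\Hom_{\bullet}$ for maps of pointed sheaves, I would first show that $U$ admits a left adjoint $\mathcal{L}$, so that the theorem becomes equivalent to the identification $\mathcal{L}(\mathbb{G}_m^{\wedge n}) \cong \mathbf{K}^{MW}_n$ under the unit $\sigma_n$. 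Unwinding the adjunction, this is the assertion that the map
\[
\Hom_{SAI}(\mathbf{K}^{MW}_n, M) \;\xrightarrow{\ \simeq\ }\; \Hom_{\bullet}(\mathbb{G}_m^{\wedge n}, M), \qquad \Phi \mapsto \Phi \circ \sigma_n,
\]
is a bijection, naturally in $M \in SAI(Sm_k)$. The existence of $\Phi$ is surjectivity, and its uniqueness is injectivity, of this map.

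The first reduction is via Morel's contraction functor. For a sheaf $M$ set $M_{-1} := \underline{\Hom}_{\bullet}(\mathbb{G}_m, M)$, the internal pointed Hom out of $\mathbb{G}_m$; the key input I would import from \cite{Morel12} is that $M_{-1}$ is again strongly $\mathbb{A}^1$-invariant whenever $M$ is. The tensor–hom adjunction for the smash product then yields
\[
\Hom_{\bullet}(A \wedge \mathbb{G}_m, M) \cong \Hom_{\bullet}(A, M_{-1}),
\]
and iterating $n$ times gives $\Hom_{\bullet}(\mathbb{G}_m^{\wedge n}, M) \cong M_{-n}(\Spec k)$, where $M_{-n}$ is the $n$-fold contraction. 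On the other side, Morel's computation that $(\mathbf{K}^{MW}_n)_{-1} \cong \mathbf{K}^{MW}_{n-1}$ sets up an induction on $n$, the base case being the identification of $\mathbf{K}^{MW}_0$ with the Grothendieck–Witt sheaf $\mathbf{GW}$. This reduces the whole problem to a compatible, degree-by-degree statement about symbols.

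The heart of the argument is to show that the relations of Definition \ref{2.1} are exactly those forced on the symbols $\phi(u_1 \wedge \cdots \wedge u_n) \in M(F)$ by strong $\mathbb{A}^1$-invariance. Here I would invoke the structure theory identifying strongly $\mathbb{A}^1$-invariant sheaves with \emph{unramified} sheaves, determined by their values on function fields together with residue and specialization data (the Rost–Schmid / Gersten description), so that it suffices to work over each field $F$ and then sheafify. Over $F$, the Steinberg relation $[a][1-a]=0$ is forced by the standard $\mathbb{A}^1$-homotopy contracting the one-parameter Steinberg family $t \mapsto (t, 1-t)$, which kills the corresponding symbol; the twisted multiplicativity $[ab]=[a]+[b]+\eta[a][b]$ and the relations involving $\eta$ and $h$ arise from the co-$H$-space structure on $\mathbb{G}_m$ and the canonical action of the algebraic Hopf map $\eta$ on contractions of any object of $SAI(Sm_k)$. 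Thus every pointed $\phi$ factors, on each field, through the quotient presenting $K^{MW}_n(F)$, and the unramified description upgrades this compatible field-level factorization to a unique morphism of sheaves $\Phi$, since an $SAI$-morphism is determined by its restrictions to function fields.

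The step I expect to be the main obstacle is the converse: that the Milnor–Witt presentation imposes \emph{no} relations beyond those forced — equivalently, that $\sigma_n$ induces an \emph{injection} $\Hom_{SAI}(\mathbf{K}^{MW}_n, M) \hookrightarrow \Hom_{\bullet}(\mathbb{G}_m^{\wedge n}, M)$, and that $\mathbf{K}^{MW}_n$ as presented is genuinely strongly $\mathbb{A}^1$-invariant, so that $\mathcal{L}(\mathbb{G}_m^{\wedge n})$ is not a proper quotient of it. This minimality is exactly Morel's construction of the free strongly $\mathbb{A}^1$-invariant sheaf, combined with the verification, through the explicit Rost–Schmid complex, that its value on each field is the \emph{full} Milnor–Witt group $K^{MW}_n$ and that it has the correct contractions. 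That computation — establishing strictness of $\mathbb{A}^1$-invariance and the acyclicity of the Rost–Schmid resolution — is where the bulk of the technical work of \cite{Morel12} resides, and I would cite it as the decisive input rather than reproduce it.
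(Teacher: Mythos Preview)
The paper does not contain a proof of Theorem~\ref{2.3}: the statement is quoted from \cite{Morel12} and used as a black box, so there is nothing on the paper's side to compare your proposal against. Your outline is a faithful high-level summary of Morel's own argument in \cite{Morel12}: the interpretation of $\mathbf{K}^{MW}_n$ as the free strongly $\mathbb{A}^1$-invariant abelian sheaf on $\mathbb{G}_m^{\wedge n}$, the reduction via contractions $(\mathbf{K}^{MW}_n)_{-1}\cong \mathbf{K}^{MW}_{n-1}$, the passage to fields through the unramified/Rost--Schmid description, and the acknowledgement that the substantive work is establishing strong $\mathbb{A}^1$-invariance of the presented sheaf and computing its values on fields. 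As a proof \emph{proposal} this is honest and correctly shaped, but it is a roadmap rather than a proof: every nontrivial step (closure of $SAI$ under contraction, the Steinberg and $\eta$-relations being forced by $\mathbb{A}^1$-homotopies, the unramified reconstruction, the Rost--Schmid acyclicity) is deferred to \cite{Morel12}, which is exactly what the paper itself does by citation.
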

\begin{theorem}\label{2.4}\cite{Morel12}
Suppose $M$ is a strongly $A^{1}$-invariant sheaf of abelian groups on $Sm_{k}$. Let $n\geq 1$ be an integer, and let $\phi:\mathbb{G}_{m}^{\wedge n}\rightarrow M$ be a morphism of pointed sheaves. Then, for any $F\in \mathcal{F}_{k}$, there is a unique morphism $\Phi_{n}(F):K_{n}^{MW}(F)\rightarrow M(F)$ such that for any $(u_{1},...,u_{n})\in (F^{*})^{n}$, $\Phi_{n}(F)([u_{1}]...[u_{n}])=\phi(u_{1},...,u_{n})$.
\end{theorem}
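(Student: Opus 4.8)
The statement is the field-level counterpart of the sheaf-theoretic universality recorded in Theorem \ref{2.3}, and the plan is to deduce it from that theorem by evaluating at generic points. Throughout, for $F\in\mathcal{F}_{k}$ and a Nisnevich sheaf $M$ I read $M(F)$ as the stalk at the generic point, $M(F)=\dirlim_{U}M(U)$, the filtered colimit over the nonempty open subschemes $U$ of a fixed smooth model $X$ of $F$; such an $X$ exists because $k$ is perfect, so every finitely generated extension is separably generated. Two inputs from \cite{Morel12} make this well posed: first, every strongly $\mathbb{A}^{1}$-invariant sheaf is unramified, so $M(F)$ does not depend on the chosen model; and second, the sheaf $\textbf{K}_{n}^{MW}$ is built so that $\textbf{K}_{n}^{MW}(F)=K_{n}^{MW}(F)$, and under this identification $\sigma_{n}$ restricts on the generic point to $(u_{1},\dots,u_{n})\mapsto[u_{1}]\cdots[u_{n}]$.

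For existence I would apply Theorem \ref{2.3} to the given $\phi$ to produce the unique morphism of sheaves of abelian groups $\Phi\colon\textbf{K}_{n}^{MW}\to M$ with $\Phi\circ\sigma_{n}=\phi$, and then pass to generic stalks. Since filtered colimits are functorial, $\Phi$ induces a homomorphism $\Phi_{n}(F)\colon K_{n}^{MW}(F)\to M(F)$. Given $(u_{1},\dots,u_{n})\in(F^{*})^{n}$, I would pick an open $U$ on which every $u_{i}$ is invertible, so that $(u_{1},\dots,u_{n})$ defines a section of $\mathbb{G}_{m}^{\wedge n}$ over $U$; then $\sigma_{n}(u_{1},\dots,u_{n})=[u_{1}]\cdots[u_{n}]$ and, passing to the colimit, $\Phi_{n}(F)([u_{1}]\cdots[u_{n}])=\Phi(\sigma_{n}(u_{1},\dots,u_{n}))=\phi(u_{1},\dots,u_{n})$, which is exactly the asserted compatibility.

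For uniqueness, the point is that for $n\geq 1$ the abelian group $K_{n}^{MW}(F)$ is generated by the pure symbols $[u_{1}]\cdots[u_{n}]$ with $u_{i}\in F^{*}$. Indeed, rewriting the relation $[ab]=[a]+[b]+\eta[a][b]$ as $\eta[a][b]=[ab]-[a]-[b]$ and applying it to two adjacent factors reduces any degree-$n$ monomial $\eta^{m}[v_{1}]\cdots[v_{n+m}]$ to a $\ZZ$-linear combination of monomials of the same degree with one fewer factor of $\eta$; induction on $m$ then expresses it through pure $n$-fold symbols. Consequently any two homomorphisms out of $K_{n}^{MW}(F)$ agreeing on all $[u_{1}]\cdots[u_{n}]$ coincide, so the prescribed values determine $\Phi_{n}(F)$ uniquely.

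The genuinely substantial content is not in these formal manipulations but in the two facts imported from \cite{Morel12}: the unramifiedness of strongly $\mathbb{A}^{1}$-invariant sheaves, which legitimizes evaluation at the generic point and makes $M(F)$ model-independent, and the identification $\textbf{K}_{n}^{MW}(F)=K_{n}^{MW}(F)$ together with the description of the generic restriction of $\sigma_{n}$. Granting these, the only care required is the bookkeeping that matches $\sigma_{n}$ on sections over a small enough $U$ with the symbol $[u_{1}]\cdots[u_{n}]$; existence and uniqueness then fall out as above.
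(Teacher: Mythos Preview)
The paper does not supply its own proof of Theorem~\ref{2.4}; it is quoted as a black-box result from \cite{Morel12}. So there is nothing in the paper to compare your argument against line by line.

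That said, your route is the reverse of Morel's. In \cite{Morel12} the field-level statement is established first, by a direct construction: one checks by hand, using the strong $\mathbb{A}^{1}$-invariance of $M$ (via the contraction formalism and the behaviour of $M_{-1}$), that the assignment $[u_{1}]\cdots[u_{n}]\mapsto\phi(u_{1},\dots,u_{n})$ respects the four Milnor--Witt relations of Definition~\ref{2.1}. Only afterwards is the sheaf-level universality (the paper's Theorem~\ref{2.3}) deduced, by combining the field-level maps with the unramified/Gersten description of $\textbf{K}_{n}^{MW}$ and of $M$. Your argument runs this backwards: you take Theorem~\ref{2.3} as given and evaluate at generic points. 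This is logically sound within the paper, since both theorems are imported wholesale, but you should be aware that in the source the dependence goes the other way; invoking Theorem~\ref{2.3} to prove Theorem~\ref{2.4} would be circular if one were actually reproving Morel's results.

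Your uniqueness argument is independent of this concern and is exactly the right elementary observation: for $n\geq 1$ the identity $\eta[a][b]=[ab]-[a]-[b]$ lets one inductively eliminate all factors of $\eta$ from any degree-$n$ monomial, so pure symbols generate $K_{n}^{MW}(F)$ as an abelian group and the values on them determine the map.
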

For two categories $\mathcal{C}$ and $\mathcal{D}$, denote by $Fct(\mathcal{C},\mathcal{D})$, the category of functors from $\mathcal{C}$ to $\mathcal{D}$.
\begin{definition}[Scissor congruence group]\cite{Suslin91}
    Let $F$ be a field. Denote by $F^{**}:=F^{*}-\{1\}$. Define $\mathcal{P}(F)$ to be the quotient of the free abelian group $\mathbb{Z}[F^{**}]$ on the set $F^{**}$ by the subgroup generated by the relations $\{x\}-\{y\}+\{\frac{y}{x}\}+\{\frac{1-x}{1-y}\}-\{\frac{y(1-x)}{x(1-y)}\}; x\neq y$. So, $\mathcal{P}\in Fct(\mathcal{F}_{k},\mathcal{A}b)$. For convenience of notation, we denote the $0$ in $\mathcal{P}(F)$ by $\{1\}$.
\end{definition}
\begin{remark}\label{2.6}
    \begin{itemize}
    \item We have $\phi\in Hom_{Fct(\mathcal{F}_{k},\mathcal{A}b)}(\mathbb{G}_{m},\mathcal{P})$ defined on $\mathbb{G}_{m}(F)=F^{*}\rightarrow\mathcal{P}(F)$ by $z\mapsto \{z\}$. So, $\phi(F)(1)=\{1\}=0$, as per our notation.
        \item Define $\mathcal{A}\in Fct(\mathcal{F}_{k},\mathcal{A}b)$ defined by $\mathcal{A}(F):=\wedge^{2}_{\mathbb{Z}}F^{*}$. Similarly, we have $\psi\in Hom_{Fct(\mathcal{F}_{k},\mathcal{A}b)}(\mathbb{G}^{\wedge 2}_{m},\mathcal{A})$ defined on $\mathbb{G}_{m}^{\wedge 2}(F)=(F^{*})^{\wedge 2}\rightarrow \mathcal{A}(F)=\wedge^{2}_{\mathbb{Z}}(F^{*})$ by $(a,b)\mapsto a\wedge b$. This makes sense as $a\wedge1=1\wedge b=0$ in $\wedge^{2}_{\mathbb{Z}}(F^{*}), \forall (a,b)\in F^{*}\times F^{*}$.
    \end{itemize}
\end{remark}
The following proposition has been taken from \cite{Suslin91}.
\begin{proposition}\label{2.7}
    For any field $F$, the element $\{x\}+\{1-x\}$ of $\mathcal{P}(F)$ is independent of the choice of $x\in F^{**}$. Call this element $c_{F}$. Moreover, if $\sqrt{-1},\sqrt{-3},2^{-1}\in F$, then $c_{F}=0$ in $\mathcal{P}(F)$.
\end{proposition}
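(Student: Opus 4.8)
The plan is to extract from the five-term relation two ``symmetry'' identities by feeding it carefully chosen substitutions and adding the resulting instances, so that most terms telescope. Write the five-term relation as $R(x,y)=0$, where
\[
R(x,y)=\{x\}-\{y\}+\Big\{\tfrac{y}{x}\Big\}+\Big\{\tfrac{1-x}{1-y}\Big\}-\Big\{\tfrac{y(1-x)}{x(1-y)}\Big\},
\]
and set $T(z):=\{z\}+\{z^{-1}\}\in\mathcal{P}(F)$ (with $T(1)=0$) and $S(z):=\{z\}+\{1-z\}$. First I would establish that $T$ is a homomorphism $F^{*}\to\mathcal{P}(F)$. Applying $R(x,y)=0$ and $R(x^{-1},y^{-1})=0$ and adding, one simplifies the arguments and checks that the two ``outer'' terms cancel against their counterparts, leaving exactly $T(x)-T(y)+T(y/x)=0$. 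Reading this as $T(y/x)=T(y)-T(x)$ gives $T(ab)=T(a)+T(b)$, so $T$ is additive; since $T(z)=T(z^{-1})$ by definition while additivity forces $T(z^{-1})=-T(z)$, we obtain $2T(z)=0$, i.e. $T$ lands in the $2$-torsion of $\mathcal{P}(F)$.

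Next, to prove independence of $S(z)$ from $z$, I would add $R(x,y)=0$ and $R(1-x,1-y)=0$. After regrouping, the non-$S$ terms assemble into $T(y/x)+T(\tfrac{1-x}{1-y})-T(w)$, where $w=\tfrac{y(1-x)}{x(1-y)}=(y/x)\cdot(\tfrac{1-x}{1-y})$. Because $T$ is a homomorphism, $T(w)=T(y/x)+T(\tfrac{1-x}{1-y})$, so all $T$-terms cancel and we are left with $S(x)-S(y)=0$. Since $x,y\in F^{**}$ with $x\neq y$ are otherwise arbitrary (and $S(\tfrac12)$ can be compared to $S(y)$ for any $y\neq\tfrac12$), this shows $S(z)=c_{F}$ is independent of $z$.

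For the vanishing of $c_{F}$ under the stated hypotheses, I would evaluate $c_{F}$ at a primitive sixth root of unity. Since $\sqrt{-3},2^{-1}\in F$, the element $\zeta=\tfrac{1+\sqrt{-3}}{2}\in F$ satisfies $\zeta^{2}-\zeta+1=0$, whence $1-\zeta=\zeta^{-1}$ and $\zeta^{3}=-1$. Thus $c_{F}=S(\zeta)=\{\zeta\}+\{\zeta^{-1}\}=T(\zeta)$. The $2$-torsion of $T$ gives $2c_{F}=0$, while additivity gives $3c_{F}=3T(\zeta)=T(\zeta^{3})=T(-1)$. Using $\sqrt{-1}\in F$ to write $-1=(\sqrt{-1})^{2}$, additivity yields $T(-1)=2T(\sqrt{-1})=0$, so $3c_{F}=0$ as well, and therefore $c_{F}=3c_{F}-2c_{F}=0$.

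The hard part will be discovering the two substitutions that make the computation collapse---in particular the substitution $(x,y)\mapsto(x^{-1},y^{-1})$ that isolates the homomorphism property of $T$, which is the real engine of the whole argument. A secondary technical point I would need to watch is the range of validity of each substitution (all five arguments must be nonzero and distinct from $1$); the homomorphism property of $T$, once verified for all such admissible pairs, extends to all of $F^{*}$ by the group law, and this is precisely what licenses comparing $S$ at arbitrary points, including $z=\tfrac12$.
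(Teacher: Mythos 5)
Your proof is correct, and its engine --- the substitution $(x,y)\mapsto(x^{-1},y^{-1})$ in the five-term relation, which shows that $T(z)=\{z\}+\{z^{-1}\}$ is additive and $2$-torsion --- is exactly the paper's (the paper writes $\langle z\rangle$ for your $T(z)$, and likewise uses the element $z$ with $1-z=z^{-1}$ furnished by $\sqrt{-3},2^{-1}\in F$ to get $2c_{F}=0$). You deviate in two places, both legitimately. First, for the independence of $S(x)=\{x\}+\{1-x\}$, the paper substitutes $(x,y)\mapsto(1-y,1-x)$ and \emph{subtracts} the two instances of the relation; the three middle terms cancel on the nose, so independence comes out self-contained, before and independently of any discussion of $T$. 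You instead add $R(x,y)+R(1-x,1-y)$ and cancel the non-$S$ terms via $T(w)=T(y/x)+T\bigl(\tfrac{1-x}{1-y}\bigr)$, which makes independence logically dependent on the homomorphism property of $T$; your ordering (establish $T$ first) is therefore essential, and you have it right. Second, for $3c_{F}=0$ the paper proves the general identity $3c_{F}=S(x)+S(x^{-1})+S((1-x)^{-1})=T(-(1-x)^{2})=T(-1)$, valid for every $x\in F^{**}$, and only then invokes $\sqrt{-1}\in F$; you instead specialize at the sixth root of unity $\zeta=\tfrac{1+\sqrt{-3}}{2}$, getting $c_{F}=S(\zeta)=T(\zeta)$, hence $2c_{F}=0$ at once and $3c_{F}=T(\zeta^{3})=T(-1)=2T(\sqrt{-1})=0$. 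Your specialization is slicker and shortens the bookkeeping (indeed, once $2T(\zeta)=0$ you even get $c_{F}=3c_{F}=T(-1)=0$ directly), while the paper's identity is more informative, since it identifies $3c_{F}$ with $\langle -1\rangle$ over an arbitrary field. The admissibility caveat you flag at the end (all arguments of the five-term relation nonzero and $\neq 1$, and extending additivity of $T$ to degenerate pairs via $T(1)=0$ and $T(a)+T(a^{-1})=2T(a)=0$) is real but routine, and the paper glosses over it in exactly the same way.
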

\begin{proof}
From the five term relation in $\mathcal{P}(F)$ (and by replacing the pair $(x,y)$ by $(1-y,1-x)$), we get
\begin{enumerate}
    \item $\{x\}-\{y\}+\{\frac{y}{x}\}+\{\frac{1-x}{1-y}\}-\{\frac{y(1-x)}{x(1-y)}\}=0$
    \item $\{1-y\}-\{1-x\}+\{\frac{1-x}{1-y}\}+\{\frac{y}{x}\}-\{\frac{y(1-x)}{x(1-y)}\}=0$
\end{enumerate}

Subtracting $(2)$ from $(1)$, we get $\{x\}+\{1-x\}=\{y\}+\{1-y\}$ for all $x,y\in F^{**}$. We call this element $c_{F}$.\\

Similarly, consider the element $\{x\}+\{x^{-1}\}$ in $\mathcal{P}(F)$. Replacing $(x,y)$ by $(x^{-1},y^{-1})$ in the five term relation, we get
\begin{itemize}
    \item $\{x\}-\{y\}+\{\frac{y}{x}\}+\{\frac{1-x}{1-y}\}-\{\frac{y(1-x)}{x(1-y)}\}=0$
    \item $\{x^{-1}\}-\{y^{-1}\}+\{\frac{x}{y}\}+\{\frac{y(1-x)}{x(1-y)}\}-\{\frac{1-x}{1-y}\}=0$
\end{itemize}
Adding these, we get $<x>-<y>+<\frac{y}{x}>=0$ where $<z>:=\{z\}+\{z^{-1}\}$ for any $z\in F^{**}$. Replacing $x$ by $y$, we have $<y>-<x>+<\frac{x}{y}>=0$. Since $<\frac{x}{y}>=<\frac{y}{x}>$, we get $2<\frac{x}{y}>=0$. Hence, $\forall z\in F^{**}$, $2<z>=0$ and $\forall x,y\in F^{**}$, $<x>+<y>=<xy>$.\\

$3c_{F}=\{x\}+\{1-x\}+\{x^{-1}\}+\{1-x^{-1}\}+\{(1-x)^{-1}\}+\{1-(1-x)^{-1}\}$\\$=<x>+<1-x>+<1-x^{-1}>=<-(1-x)^{2}>=<-1>$. So, if $z:=\sqrt{-1}\in F$, $3c_{F}=<z^{2}>=0$. 
If $\sqrt{-3},2^{-1}\in F$, $\exists z\in F: 1-z=z^{-1}$. So, $c_{F}=\{z\}+\{1-z\}=<z>\implies 2c_{F}=0$. Combining both observations, if $\sqrt{-1},\sqrt{-3},2^{-1}\in F$, $c_{F}=0$.

\end{proof}

\begin{proposition}\label{2.8}
Let $F$ be a field and $x,y\in F^{*}$ such that $x^{a}=y^{b}$ only when $(a,b)=(0,0)\in\mathbb{Z}^{2}$, then $x\wedge y\neq0$ in $\wedge^{2}_{\mathbb{Z}}(F^{*})$.    
\end{proposition}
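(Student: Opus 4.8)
The plan is to reformulate the hypothesis multiplicatively and then detect the class $x\wedge y$ by an explicitly constructed skew-symmetric form. First I would unwind the condition: if $x^{m}y^{n}=1$ for some $(m,n)\in\mathbb{Z}^{2}$, then $x^{m}=y^{-n}$, so applying the hypothesis with $a=m$, $b=-n$ forces $(m,-n)=(0,0)$. Thus $x$ and $y$ satisfy no nontrivial multiplicative relation, and the subgroup $H:=\langle x,y\rangle\subseteq F^{*}$ is free abelian of rank two with basis $\{x,y\}$.

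Next I would use the defining presentation of $\wedge^{2}_{\mathbb{Z}}F^{*}$. Since it is the quotient of $F^{*}\otimes_{\mathbb{Z}}F^{*}$ by the subgroup generated by the elements $a\otimes b+b\otimes a$, a homomorphism $\wedge^{2}_{\mathbb{Z}}F^{*}\to A$ is the same datum as a biadditive map $\phi:F^{*}\times F^{*}\to A$ which is skew-symmetric, i.e.\ $\phi(a,b)=-\phi(b,a)$. Hence, writing $x\wedge y$ for the image of $x\otimes y$, it suffices to produce an abelian group $A$ and such a $\phi$ with $\phi(x,y)\neq 0$.

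To build $\phi$, I would construct two characters $\chi_{1},\chi_{2}:F^{*}\to\mathbb{Q}$. On $H$ the set $\{x,y\}$ is a free basis, so I may define homomorphisms $H\to\mathbb{Q}$ freely on generators by $\chi_{1}(x)=1,\ \chi_{1}(y)=0$ and $\chi_{2}(x)=0,\ \chi_{2}(y)=1$. Because $\mathbb{Q}$ is divisible, hence an injective $\mathbb{Z}$-module, each of these extends along the inclusion $H\hookrightarrow F^{*}$ to a homomorphism $\chi_{i}:F^{*}\to\mathbb{Q}$. Setting $\phi(a,b):=\chi_{1}(a)\chi_{2}(b)-\chi_{2}(a)\chi_{1}(b)$ gives a biadditive map with $\phi(b,a)=-\phi(a,b)$, so it factors through $\wedge^{2}_{\mathbb{Z}}F^{*}$; and $\phi(x,y)=1\cdot 1-0\cdot 0=1\neq 0$. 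Therefore the induced homomorphism $\wedge^{2}_{\mathbb{Z}}F^{*}\to\mathbb{Q}$ sends $x\wedge y$ to $1$, whence $x\wedge y\neq 0$.

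The point to flag as the main obstacle is that one cannot simply argue that $x\wedge y$ is already nonzero in $\wedge^{2}_{\mathbb{Z}}H\cong\mathbb{Z}$ and then transport this along $H\hookrightarrow F^{*}$: the induced map $\wedge^{2}_{\mathbb{Z}}H\to\wedge^{2}_{\mathbb{Z}}F^{*}$ need not be injective, since second exterior powers are not left exact and $H$ need not be a direct summand of $F^{*}$. The divisibility (injectivity) of $\mathbb{Q}$ is exactly what repairs this: it allows the coordinate functionals on $H$ to be extended to all of $F^{*}$, so that the class of $x\wedge y$ can be separated from zero directly inside $\wedge^{2}_{\mathbb{Z}}F^{*}$.
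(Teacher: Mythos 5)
Your proof is correct, and it takes a genuinely different (in fact, dual) route to the paper's. The paper rationalizes the whole exterior square: it forms the commutative diagram comparing $\wedge^{2}_{\ZZ}G\to\wedge^{2}_{\ZZ}F^{*}$ for $G=\langle x,y\rangle$ with its tensorization by $\QQ$, identifies $\wedge^{2}_{\ZZ}(M)\otimes_{\ZZ}\QQ$ with $\wedge^{2}_{\QQ}(M\otimes_{\ZZ}\QQ)$, and concludes because $\wedge^{2}_{\QQ}$ of an inclusion of $\QQ$-vector spaces is injective while $(x\otimes 1)\wedge(y\otimes 1)$ spans the one-dimensional space $\wedge^{2}_{\QQ}(G\otimes_{\ZZ}\QQ)$. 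You instead build an explicit detecting functional: extend the coordinate characters of $H=\langle x,y\rangle$ to $\chi_{1},\chi_{2}\colon F^{*}\to\QQ$ using injectivity of $\QQ$ as a $\ZZ$-module, and let the skew form $\phi(a,b)=\chi_{1}(a)\chi_{2}(b)-\chi_{2}(a)\chi_{1}(b)$ induce a homomorphism $\wedge^{2}_{\ZZ}F^{*}\to\QQ$ with $x\wedge y\mapsto 1$; your skew-symmetry check matches exactly the paper's definition of $\wedge^{2}_{\ZZ}$ as the quotient of the tensor square by $a\otimes b+b\otimes a$, and your unwinding of the hypothesis to get $H$ free of rank two with basis $\{x,y\}$ is the same step the paper performs. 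Both arguments ultimately rest on divisibility/flatness of $\QQ$ --- your $\chi_{1}\wedge\chi_{2}$ is essentially the paper's map to $\wedge^{2}_{\QQ}(F^{*}\otimes_{\ZZ}\QQ)$ followed by a coordinate functional coming from a choice of complement of $G\otimes_{\ZZ}\QQ$ --- but yours is more self-contained: it avoids the base-change isomorphism $\wedge^{2}_{\ZZ}(M)\otimes_{\ZZ}\QQ\cong\wedge^{2}_{\QQ}(M\otimes_{\ZZ}\QQ)$, which the paper asserts via the vertical arrows of its diagram without proof, and replaces the injectivity claim for the bottom map by the concrete computation $\phi(x,y)=1$. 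The paper's formulation is more functorial and generalizes at once to $\wedge^{n}$ of $n$ multiplicatively independent units; your closing remark on the failure of left exactness of $\wedge^{2}_{\ZZ}$ (and of $H$ being a direct summand) correctly identifies the very trap that the paper's diagram is also designed to circumvent.
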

\begin{proof}
    Let $G:=<x,y>$, the subgroup of $F^{*}$ generated by $x$ and $y$. So, $G$ is a free abelian group of rank 2. Now, consider the following commutative diagram induced by the inclusion $i:G\rightarrow F^{*}$:

\[\begin{tikzcd}
	{\wedge^{2}_{\mathbb{Z}}G} && {\wedge^{2}_{\mathbb{Z}}F^{*}} \\
	{\wedge^{2}_{\mathbb{Z}}G\bigotimes_{\mathbb{Z}}\mathbb{Q}} && {\wedge^{2}_{\mathbb{Z}}F^{*}\bigotimes_{\mathbb{Z}}\mathbb{Q}} \\
	{\wedge^{2}_{\mathbb{Q}}(G\otimes_{\mathbb{Z}}\mathbb{Q})} && {\wedge^{2}_{\mathbb{Q}}(F^{*}\otimes_{\mathbb{Z}}\mathbb{Q})}
	\arrow["{\wedge^{2}_{\mathbb{Z}}(i)}"', from=1-1, to=1-3]
	\arrow[from=1-1, to=2-1]
	\arrow[from=1-3, to=2-3]
	\arrow[from=2-1, to=2-3]
	\arrow["\cong"', from=2-1, to=3-1]
	\arrow["\cong", from=2-3, to=3-3]
	\arrow[hook, from=3-1, to=3-3]
\end{tikzcd}\]

Now, the bottom arrow is injective as $G\otimes_{\mathbb{Z}}\mathbb{Q}$ is $\mathbb{Q}$-subspace of the vector space $F^{*}\otimes_{\mathbb{Z}}\mathbb{Q}$. Also, note that the one dimensional $\mathbb{Q}$-vector space $\wedge^{2}_{\mathbb{Q}}(G\otimes_{\mathbb{Z}}\mathbb{Q})$ is spanned by $(x\otimes_{\mathbb{Z}}1)\wedge_{\mathbb{Q}} (y\otimes_{\mathbb{Z}}1)$. If $x\wedge y=0$ in $\wedge^{2}_{\mathbb{Z}}F^{*}$, $(x\otimes_{\mathbb{Z}}1)\wedge_{\mathbb{Q}} (y\otimes_{\mathbb{Z}}1)=0$ in $\wedge^{2}_{\mathbb{Q}}(F^{*}\otimes_{\mathbb{Z}}\mathbb{Q})$, a contradiction to the bottom map being injective.

\end{proof}
\section{Main theorems}
\begin{theorem}\label{3.1}
For any field $k$ with char($k$)=0, the functor $\mathcal{P}$ on $\mathcal{F}_{k}$ can not be extended to an element of SAI($Sm_{k}$) such that the map $\phi:\mathbb{G}_{m}\rightarrow \mathcal{P}$ in $Hom_{Fct(\mathcal{F}_{k},\mathcal{A}b)}(\mathbb{G}_{m},\mathcal{P})$ as in Remark \ref{2.6} extends to a map between the pointed sheaves on $Sm_{k}$: $\mathbb{G}_{m}\rightarrow\mathcal{P}$ where the latter is pointed by $0$.
\end{theorem}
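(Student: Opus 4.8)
The plan is to argue by contradiction using the universality recorded in Theorem \ref{2.4}. Suppose a strongly $\mathbb{A}^{1}-$invariant sheaf $\mathcal{P}$ and a morphism of pointed sheaves $\phi\colon\Gm\to\mathcal{P}$ existed, with $\mathcal{P}$ recovering the scissors functor on fields and $\phi$ restricting to $z\mapsto\{z\}$. Then for every $F\in\mathcal{F}_{k}$ Theorem \ref{2.4} (with $n=1$) supplies a homomorphism of abelian groups $\Phi_{1}(F)\colon K_{1}^{MW}(F)\to\mathcal{P}(F)$ with $\Phi_{1}(F)([u])=\phi(u)=\{u\}$ for every $u\in F^{*}$. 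The entire strategy is to push a single Milnor--Witt relation through $\Phi_{1}(F)$, producing an identity in $\mathcal{P}(F)$ that is not among the five-term relations, and then to detect its failure by postcomposing with the natural map $\lambda\colon\mathcal{P}(F)\to\wedge^{2}_{\ZZ}F^{*}$, $\{z\}\mapsto z\wedge(1-z)$.

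The forced relation I would use comes from the Steinberg relation. As recorded in Definition \ref{2.1}, in $K_{1}^{MW}(F)$ one has $[a(1-a)]=[a]+[1-a]$ for every $a\in F^{**}$, because the correction term $\eta[a][1-a]$ vanishes by Steinberg. Applying the homomorphism $\Phi_{1}(F)$ and using $\Phi_{1}(F)([u])=\{u\}$ gives the identity $\{a(1-a)\}=\{a\}+\{1-a\}$ in $\mathcal{P}(F)$. I stress that this is an \emph{extra} relation imposed by the hypothetical factorization through Milnor--Witt $K$-theory, and it is precisely the failure of this relation that I intend to exhibit.

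I would then specialize to the function field $F=k(t)$ with $a=t$, setting $v:=t(1-t)\in F^{**}$, so that $1-v=t^{2}-t+1$. Applying the well-defined map $\lambda$ to the forced identity $\{v\}=\{t\}+\{1-t\}$ gives, on the left, $\lambda(\{v\})=v\wedge(1-v)$, and on the right $\lambda(\{t\}+\{1-t\})=t\wedge(1-t)+(1-t)\wedge t=0$, the two wedge terms being negatives of one another. Thus the hypothetical extension forces $v\wedge(1-v)=t(1-t)\wedge(t^{2}-t+1)=0$ in $\wedge^{2}_{\ZZ}F^{*}$. To contradict this I invoke Proposition \ref{2.8}: I claim $v$ and $1-v$ are multiplicatively independent in $k(t)^{*}$. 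Indeed $v=t(1-t)$ has simple zeros at $t=0$ and $t=1$, whereas $1-v=t^{2}-t+1$ vanishes only at $\tfrac{1\pm\sqrt{-3}}{2}\neq0,1$; comparing orders at $t=0$ in any relation $v^{a}=(1-v)^{b}$ forces $a=0$, and then $b=0$. Proposition \ref{2.8} now yields $v\wedge(1-v)\neq0$, the desired contradiction.

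The main obstacle is conceptual rather than computational: one must isolate a relation that is automatically imposed by factoring $\phi$ through $K_{1}^{MW}$ (here $[a(1-a)]=[a]+[1-a]$, hence $\{a(1-a)\}=\{a\}+\{1-a\}$) yet is visibly incompatible with the antisymmetric structure of $\wedge^{2}_{\ZZ}F^{*}$ seen through $\lambda$. The convenience of taking $F=k(t)$ is that it makes the multiplicative independence required by Proposition \ref{2.8} completely transparent and removes any need to solve $a(1-a)=v$ inside $F$; a transcendental generator is exactly what guarantees disjoint zero loci for $v$ and $1-v$.
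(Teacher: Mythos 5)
Your proof is correct, and while it shares the paper's overall skeleton --- argue by contradiction via the universality of $\sigma_{1}$ (Theorems \ref{2.3}/\ref{2.4}), push the Steinberg-forced identity $\{a(1-a)\}=\{a\}+\{1-a\}$ into $\mathcal{P}(F)$, and detect the failure through $\lambda$ and Proposition \ref{2.8} --- the middle of your argument takes a genuinely different and leaner route. The paper works over $K=k(\sqrt{-1},\sqrt{-3},\sqrt{-11})$, invokes Proposition \ref{2.7} to get $c_{K}=0$, hence $\{z\}+\{1-z\}=0$ in $\mathcal{P}(K)$, and then solves $z(1-z)=3$ (the reason $\sqrt{-11}$ is adjoined) to obtain the vanishing $\{3\}=0$ \emph{inside} the pre-Bloch group, which $\lambda$ then contradicts via $3\wedge(-2)\neq 0$. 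You bypass Proposition \ref{2.7} entirely: applying $\lambda$ to the forced relation kills the right-hand side automatically by antisymmetry, since $\lambda(\{t\})+\lambda(\{1-t\})=t\wedge(1-t)+(1-t)\wedge t=0$, so no vanishing of $c_{F}$ in $\mathcal{P}$ itself is ever needed; and taking $F=k(t)$ with $a=t$ makes the multiplicative-independence hypothesis of Proposition \ref{2.8} a transparent valuation computation at $t=0$, eliminating both the quadratic-solving step and the field extension. What the paper's route buys is the sharper intermediate conclusion that the hypothetical extension already degenerates at the level of $\mathcal{P}(K)$ (namely $\{3\}=0$ before applying $\lambda$), plus Proposition \ref{2.7} as a statement of independent interest; your route buys brevity and uniformity. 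Two small points to make explicit in a final write-up, both already implicit in what you wrote: $v=t(1-t)$ lies in $F^{**}$ (i.e., $v\neq 0,1$, clear since $t$ is transcendental, so $\{v\}$ and $\Phi_{1}(F)([v])=\{v\}$ make sense), and $b=0$ follows from $(t^{2}-t+1)^{b}=1$ because $t^{2}-t+1$ is a nonconstant polynomial.
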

\begin{proof}
    Let us assume to the contrary that $\mathcal{P}$ can be extended to such an element of $SAI(Sm_{k})$. Then, by Theorem \ref{2.3}, $\exists \Phi\in Hom_{Shv_{\mathcal{A}b}(Sm_{k})}(K_{1}^{MW},\mathcal{P})$ such that the following is a commutative diagram in $Shv(Sm_{k})$:

\[\begin{tikzcd}
	{\mathbb{G}_{m}} \\
	{K^{MW}_{1}} && {\mathcal{P}}
	\arrow["{\sigma_{1}}"', from=1-1, to=2-1]
	\arrow["\phi", from=1-1, to=2-3]
	\arrow["\Phi"', from=2-1, to=2-3]
\end{tikzcd}\]

Now, let $K:=k(\sqrt{-1},\sqrt{-3},\sqrt{-11})$, an object of $\mathcal{F}_{k}$. By Proposition \ref{2.7}, $c_{K}=0$ in $\mathcal{P}(K)$. By Theorem \ref{2.4}, we have $\Phi(K):K^{MW}_{1}(K)\rightarrow \mathcal{P}(K)$ and $\Phi(K)([z])=\{z\}$. Now, by Remark \ref{2.6}, for any $z\in K^{**}$, $[z(1-z)]=[z]+[1-z]$ in $K^{MW}_{1}(K)$. Under $\Phi$ this maps to $\{z(1-z)\}=c_{K}=0$ in $\mathcal{P}(K)$. Since $\sqrt{-11}\in K$, $\exists w\in K$, a solution of the quadratic equation $z(1-z)=z-z^{2}=3$. So, $\{3\}=0$ in $\mathcal{P}(K)$. $\{3\}$ maps to $3\wedge(-2)$ under the map $\lambda:\mathcal{P}(K)\rightarrow \wedge^{2}_{\mathbb{Z}}{F}$. As $\mathrm{char}(K)=0$, $3^{a}=(-2)^{b}$ for $(a,b)\in\mathbb{Z}\times\mathbb{Z}$ only when $a=0=b$. By Proposition \ref{2.8}, $3\wedge(-2)\neq 0$ in $\wedge^{2}_{\mathbb{Z}}(K^{*})$, a contradiction.
    \end{proof}
\begin{theorem}\label{3.2}
    For any field $k$ with char($k$)=0, the functor $\mathcal{A}$ on $\mathcal{F}_{k}$ can not be extended to an element of SAI($Sm_{k}$) such that the map $\psi:\mathbb{G}^{\wedge2}_{m}\rightarrow \mathcal{A}$ in $Hom_{Fct(\mathcal{F}_{k},\mathcal{A}b)}(\mathbb{G}_{m},\mathcal{A})$ as in Remark \ref{2.6} extends to a map between the pointed sheaves on $Sm_{k}$: $\mathbb{G}_{m}^{\wedge 2}\rightarrow\mathcal{A}$ where the latter is pointed by $0$.
\end{theorem}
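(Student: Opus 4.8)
The plan is to rerun the argument of Theorem \ref{3.1}, but at level $n=2$ and with a cleaner source of contradiction. Assume for contradiction that $\mathcal{A}$ extends to a strongly $\mathbb{A}^{1}$-invariant sheaf on $Sm_{k}$ (still denoted $\mathcal{A}$) whose restriction to fields is $F\mapsto\wedge^{2}_{\mathbb{Z}}F^{*}$, and that the symbol map $\psi:\mathbb{G}_{m}^{\wedge 2}\to\mathcal{A}$ of Remark \ref{2.6} extends to a morphism of pointed sheaves on $Sm_{k}$. Since the extended $\mathcal{A}$ now lies in $SAI(Sm_{k})$, I would apply Theorem \ref{2.3} to the universal symbol $\sigma_{2}$: this produces a unique $\Psi\in Hom_{Shv_{\mathcal{A}b}(Sm_{k})}(K_{2}^{MW},\mathcal{A})$ with $\Psi\circ\sigma_{2}=\psi$, fitting into the triangle analogous to the one in the proof of Theorem \ref{3.1}.

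Next I would pass to field-level values. Fix any $F\in\mathcal{F}_{k}$; by Theorem \ref{2.4} the induced map $\Psi(F):K_{2}^{MW}(F)\to\mathcal{A}(F)=\wedge^{2}_{\mathbb{Z}}F^{*}$ satisfies $\Psi(F)([u_{1}][u_{2}])=\psi(u_{1},u_{2})=u_{1}\wedge u_{2}$ for all $u_{1},u_{2}\in F^{*}$. The decisive input is the Steinberg relation of Definition \ref{2.1}: for every $a\in F^{**}$ one has $[a][1-a]=0$ in $K_{2}^{MW}(F)$. Applying the group homomorphism $\Psi(F)$ to both sides and using the previous formula then forces
\[
a\wedge(1-a)=\Psi(F)\big([a][1-a]\big)=\Psi(F)(0)=0 \qquad\text{in }\wedge^{2}_{\mathbb{Z}}F^{*}.
\]

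Finally I would contradict this using Proposition \ref{2.8}. Because $\mathrm{char}(k)=0$ we have $\mathbb{Q}\subseteq k$, so I may already take $F=k$ and $a=3$, giving $1-a=-2$. The pair $(3,-2)$ is multiplicatively independent: $3^{p}=(-2)^{q}$ forces $p=q=0$ by unique factorization in $\mathbb{Z}$, so Proposition \ref{2.8} gives $3\wedge(-2)\neq 0$, contradicting the displayed identity. In contrast with Theorem \ref{3.1}, no passage to a field containing $\sqrt{-1},\sqrt{-3},\sqrt{-11}$ and no use of the five-term relation or of $c_{F}$ is needed: the obstruction is already visible over $k$ itself. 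Accordingly, there is no serious computational obstacle here; the only load-bearing point is the conceptual observation that the single new degree-$2$ relation defining $K_{2}^{MW}$, namely Steinberg, is precisely the relation that $\wedge^{2}_{\mathbb{Z}}F^{*}$ fails to satisfy, after which the contradiction via $\Psi$ and Proposition \ref{2.8} is immediate.
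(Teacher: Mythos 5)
Your proof is correct and is essentially identical to the paper's own argument: both invoke Theorem \ref{2.3} to produce $\Psi$ with $\Psi\circ\sigma_{2}=\psi$, evaluate on fields via Theorem \ref{2.4}, and derive the contradiction from the Steinberg relation $[3][-2]=0$ in $K_{2}^{MW}(k)$ together with $3\wedge(-2)\neq 0$ supplied by Proposition \ref{2.8}. The paper's proof likewise works directly over $k$ with the pair $(3,-2)$, so your remark that no field extension or five-term relation is needed accurately describes its route as well.
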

\begin{proof}
    Let us assume to the contrary that $\mathcal{A}$ can be extended to such an element of $SAI(Sm_{k})$. By Theorem \ref{2.3}, $\exists \Psi\in Hom_{Shv_{\mathcal{A}b}(Sm_{k})}(K^{MW}_{2},\mathcal{P})$ such that the following is a commutative diagram in $Shv(Sm_{k})$:

\[\begin{tikzcd}
	{\mathbb{G}^{\wedge2}_{m}} \\
	{K^{MW}_{2}} && {\mathcal{A}}
	\arrow["{\sigma_{2}}"', from=1-1, to=2-1]
	\arrow["\psi", from=1-1, to=2-3]
	\arrow["\Psi"', from=2-1, to=2-3]
\end{tikzcd}\]
By Definition \ref{2.1}, $[3][-2]=0$ in $K^{MW}_{2}(k)$ but maps to $3\wedge(-2)$ under $\Psi(k)$ as in Theorem \ref{2.4}. As in the proof of the previous theorem, since $\mathrm{char}(k)=0$, $3\wedge(-2)\neq0$ in $\mathcal{A}(k)=\wedge^{2}_{\mathbb{Z}}(k^{*})$, a contradiction.
\end{proof}
\begin{corollary}
    There is no $S\in SAI(Sm_{k})$ such that $S(X)=\mathcal{P}(\mathcal{O}(X)^{*})$ or $S(X)=\mathcal{P}(F(X))$ where $X\in Sm_{k}$ is irreducible with function field $F(X)$ (and analogously for $\mathcal{A}$). Here, $\mathcal{P}(\mathcal{O}(X)^{*})$ is defined to be the quotient of the free abelian group $\mathbb{Z}[\mathcal{O}(X)^{*}-\{1\}]$ by the subgroup generated by the five-term relations whenever they make sense in $\mathcal{O}(X)^{*}$.
\end{corollary}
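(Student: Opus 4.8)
The plan is to reduce all four cases to Theorem \ref{3.1} (for $\mathcal{P}$) and Theorem \ref{3.2} (for $\mathcal{A}$), by observing that every one of the proposed formulas for $S$ comes equipped with a tautological symbol map. I will spell out the case $S(X)=\mathcal{P}(\mathcal{O}(X)^{*})$; the remaining three are identical. The presentation of $\mathcal{P}(\mathcal{O}(X)^{*})$ as a quotient of $\mathbb{Z}[\mathcal{O}(X)^{*}-\{1\}]$ furnishes, for every $X$, a pointed map $\phi_{X}\colon\mathbb{G}_{m}(X)=\mathcal{O}(X)^{*}\to S(X)$, $z\mapsto\{z\}$ (with $1\mapsto 0$). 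These are natural in $X$ essentially by construction: for $f\colon X\to Y$ the transition map $S(f)=\mathcal{P}(f^{\#})$ is induced by the ring homomorphism $f^{\#}\colon\mathcal{O}(Y)\to\mathcal{O}(X)$, so it acts on generators by $\{u\}\mapsto\{f^{\#}u\}$, which is exactly what naturality of $\phi$ demands (the pointing convention $\{1\}=0$ absorbs the case $f^{\#}u=1$). Hence $\phi=\{\phi_{X}\}$ is a morphism of pointed sheaves $\mathbb{G}_{m}\to S$.

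The step that requires genuine verification is the identification of the restriction of $S$ to $\mathcal{F}_{k}$ with the functor $\mathcal{P}$. Evaluating the sheaf on $F\in\mathcal{F}_{k}$ amounts to taking the generic stalk, i.e. the filtered colimit $\dirlim_{U}S(U)=\dirlim_{U}\mathcal{P}(\mathcal{O}(U)^{*})$ over smooth models $U$ of $F$. Since $\mathcal{P}$ is defined by generators and relations it commutes with filtered colimits, and $\dirlim_{U}\mathcal{O}(U)^{*}=F^{*}$ with every five-term relation eventually becoming meaningful in some $\mathcal{O}(U)^{*}$; therefore the colimit is $\mathcal{P}(F)$, and under this identification $\phi$ restricts to the symbol map of Remark \ref{2.6}. (For the presentation $S(X)=\mathcal{P}(F(X))$ this identification is immediate, since the value already depends only on the function field.)

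Putting these together, such an $S$ would extend the functor $\mathcal{P}$ on $\mathcal{F}_{k}$ to an object of $SAI(Sm_{k})$ for which the field-level symbol map $\phi$ extends to a morphism of pointed sheaves $\mathbb{G}_{m}\to S$ -- precisely the situation excluded by Theorem \ref{3.1}, giving a contradiction. The two cases for $\mathcal{A}$ are handled verbatim, replacing $\mathcal{P}$ by $\wedge^{2}_{\mathbb{Z}}(-)^{*}$, the symbol $z\mapsto\{z\}$ by $(a,b)\mapsto a\wedge b$ of Remark \ref{2.6}, and Theorem \ref{3.1} by Theorem \ref{3.2}. I expect the filtered-colimit identification of the generic stalk to be the only point needing care; the existence of the symbol map and the final contradiction are then formal consequences of the universal property of $\sigma_{n}$ recorded in Theorems \ref{2.3}--\ref{2.4}.
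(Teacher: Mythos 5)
Your proposal is correct and takes essentially the same approach as the paper's own proof: both obtain the tautological pointed-sheaf symbol map $\mathbb{G}_{m}\to S$ (using $F(X)^{*}=\varinjlim_{U\subset X}\mathcal{O}(U)^{*}$ so that it restricts on fields to the canonical symbol map of Remark \ref{2.6}) and then derive the contradiction from Theorems \ref{3.1} and \ref{3.2}. The only difference is one of detail, not substance: you spell out the naturality check and the filtered-colimit identification $\varinjlim_{U}\mathcal{P}(\mathcal{O}(U)^{*})\cong\mathcal{P}(F(X))$, which the paper leaves implicit.
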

\begin{proof}
    Let $X$ be an irreducible scheme in $Sm_{k}$. To the contrary, if there is such an $S$, since $F(X)^{*}=\varinjlim_{U\subset X}\mathcal{O}(U)^{*}$ (running over open subschemes of $X$), we have maps between the pointed sheaves $\mathbb{G}_{m}$ and $S$ (pointed by $0$) which restrict to the canonical symbol maps. This holds in both cases, $S(X)=\mathcal{P}(\mathcal{O}(X)^{*})$ and $S(X)=\mathcal{P}(F(X))$ and contradicts Theorem \ref{3.1}. The analogous statement for $\mathcal{A}$ follows similarly from Theorem \ref{3.2}.
\end{proof}

\end{document}